\newcommand{\dem}{{\rm dem}}
\newcommand{\Int}{{\rm Int}}
\newtheorem{myclaim}{Claim}
\title{Orienting triangulations\thanks{This work was supported by the project EGOS, ANR-12-JS02-002-01}}
\author{Boris Albar \and Daniel Gon\c{c}alves \and Kolja Knauer}
\institute{LIRMM, CNRS \& Universit\'e Montpellier 2}
\begin{document}

\maketitle

\begin{abstract}
We prove that any triangulation of a surface different from the sphere
and the projective plane admits an orientation without sinks such that every vertex has outdegree divisible
by three. This confirms a conjecture of
Bar\'at and Thomassen and is a step towards a generalization of
Schnyder woods to higher genus surfaces.
\end{abstract}

\section{Introduction}
The notation and results we use for graphs and surfaces can be found
in~\cite{Moh-01}. We start with some basic definitions:

A \emph{map} (or \emph{$2$-cell embedding}) of a multigraph into a
surface, is an embedding such that deleting the graph from the surface
leaves a collection of open disks, called the \emph{faces} of the
map. A \emph{triangulation} is a map of a simple graph (i.e. without
loops or multiple edges) where every face is triangular (i.e. incident to three edges).  A fundamental
%% map. A \emph{triangulation} (resp. \emph{quadrangulation}\daniel{maybe
%%   removed later}) is a map of a simple graph (i.e. without loops or
%% multiple edges) where every face is triangular (resp. quadrangular),
%% that is with a boundary walk of length 3 (resp. 4).  A fundamental
result in the topology of surfaces is that every surface admits a map.
The \emph{(orientable) genus} of a map on an orientable surface is
$\frac{1}{2}(2-n+m-f)$ and the \emph{(non-orientable) genus} of a map
on a non-orientable surface is $2-n+m-f$, where $n,m,f$ denotes the
number of vertices, edges, and faces of the map, respectively. The
\emph{Euler genus} $k$ of a map is $2-n+m-f$, i.e., the non-orientable
genus or twice the orientable genus.  All the maps on a fixed surface
have the same genus, which justifies to define the \emph{(Euler) genus
  of a surface} as the (Euler) genus of any of the maps it admits.
%% \daniel{NOT NECESSARY :\\
%% The orientable genus of an orientable surface is the number of its
%% handles, and the non-orientable genus of a non-orientable surface is
%% the number of its crosscaps.
%% }
In~\cite{Bar-06} Bar\'at and Thomassen conjectured the following:
\begin{conjecture}\label{conj}
 Let $T$ be a triangulation of a surface of Euler genus $k \geq 2$.
 Then $T$ has an orientation such that each outdegree is at least $3$,
 and divisible by $3$.
\end{conjecture}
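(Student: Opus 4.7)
\medskip
\noindent\textbf{Proof outline.} The plan is to split the problem into two stages: first construct an orientation in which every outdegree is divisible by $3$ (call it a \emph{mod-$3$ orientation}), and then modify it to eliminate all sinks while preserving the mod-$3$ condition. The combined condition then forces every outdegree to be at least $3$. Throughout, I use Euler's formula for a triangulation, which gives $m=3(n+k-2)$ and hence $\sum_{v}d^{+}(v)=m\equiv 0\pmod 3$ for every orientation; this removes the obvious global obstruction.

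\medskip
\noindent\textbf{Stage 1: building a mod-$3$ orientation.} Starting from an arbitrary orientation $O_0$, define the \emph{deficiency} $\dem(v):=d^{+}_{O_0}(v)\bmod 3$. The sum $\sum_{v}\dem(v)$ is divisible by $3$. The key local move is the reversal of a directed $u$-$w$ path: such a reversal shifts $\dem$ by $-1$ at $u$ and $+1$ at $w$ and leaves the deficiency of every intermediate vertex unchanged. Since a triangulation is $3$-connected and its underlying digraph can be made strongly connected (after reversing a suitable set of cycles, which does not affect deficiencies), I can cancel deficiencies pairwise: match a vertex of deficiency $1$ with one of deficiency $2$ and reroute via a directed path, iterating until $\dem\equiv 0$ everywhere.

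\medskip
\noindent\textbf{Stage 2: killing sinks.} Among all mod-$3$ orientations of $T$, take one minimising the number of sinks $s$, and suppose for contradiction $s>0$. If every non-sink had outdegree exactly $3$, then the number of non-sinks would be $m/3=n+k-2$, leaving $2-k$ sinks; but $k\geq 2$ forces $s\leq 0$, a contradiction. Hence some vertex $u$ has $d^{+}(u)\geq 6$. Fix a sink $v$ and look for three edge-disjoint directed paths from $u$ to $v$; reversing them adds $3$ to $d^{+}(v)$, subtracts $3$ from $d^{+}(u)$, and leaves the outdegrees of every other vertex unchanged. This strictly decreases the number of sinks (since $v$ now has outdegree $3$ and $u$, having had at least $6$, is still non-sink), contradicting the minimality of $O$.

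\medskip
\noindent\textbf{Main obstacle.} The delicate step is the production of three edge-disjoint directed paths from the chosen rich vertex $u$ to the sink $v$. The undirected graph is $3$-connected, but in our directed setting Menger's theorem requires bounding the minimum directed $(u,v)$-edge cut; this is not automatic. I expect the resolution to require either a more flexible redistribution move (a ``$\mathbb{Z}_3$-flow'' subgraph that need not be three parallel paths but any subgraph with net flow $\equiv 0\pmod 3$ at each vertex), or a preliminary rerouting of $O$ along non-contractible cycles, which exist precisely because $k\geq 2$. Since the conjecture is false for $k\in\{0,1\}$, some genuinely topological input must enter at this stage, and pinpointing it is the crux of the argument.
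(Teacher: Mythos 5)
Your two-stage plan (first achieve all outdegrees $\equiv 0\pmod 3$, then repair sinks by reversing flow from high-outdegree vertices) is a natural one, and Stage 1 plus the counting argument that some vertex must have outdegree $\geq 6$ whenever a sink exists and $k\geq 2$ are both correct. However, you have correctly located but \emph{not} closed the genuine gap: Stage 2 hinges on finding three edge-disjoint \emph{directed} $u$--$v$ paths (or, more generally, a set $S$ of edges whose reversal changes every vertex's outdegree by a multiple of $3$, with a positive change at the sink $v$). Directed Menger requires every directed $(u,v)$-edge cut to have size at least $3$, and undirected $3$-connectivity gives no control over directed cuts; it is easy to imagine a mod-$3$ orientation in which a sink sits ``downstream'' of a tiny directed cut. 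Reversing directed cycles preserves outdegrees and can improve reachability, but nothing you say bounds the directed edge-connectivity from $u$ to $v$ by $3$. The ``$\mathbb{Z}_3$-flow'' reformulation you mention is essentially where the difficulty lives, and you explicitly flag it as unresolved, so the argument as written is not a proof. In particular, the topology (genus $\geq 2$) has to be used somewhere beyond the counting $s=2-k$, and your proposal does not yet say how.

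For comparison, the paper avoids the ``start from any mod-$3$ orientation and fix it'' strategy entirely. It argues on a minimal counterexample $T$, shows $T$ contains no non-trivial $3$-disks, and then \emph{builds} the orientation directly. The key structural ingredient is an induced submap $I$ containing a non-contractible cycle, with an edge $e^*=\{u,v\}$ such that $I\setminus e^*$ is a maximal outerplanar disk whose only degree-two vertices are $u,v$; the remaining edges are partitioned into a correction graph $B$ (acyclic, each non-$I$ vertex gets two outgoing $B$-arcs), a path $G$ joining $u$ to $v$, and a ``non-zero'' graph $R$ whose orientation alone guarantees positive outdegree almost everywhere. The construction proceeds by stacking vertices one at a time while maintaining carefully chosen invariants on boundary angles. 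The mod-$3$ condition is then enforced by reorienting the $B$-arcs in a topological order (a trick already in Bar\'at--Thomassen for the mod-$3$ part alone), and the positivity is guaranteed structurally by $R$ and $G$, never needing a sink-repair step. So the place where genus $\geq 2$ enters the paper's proof is in the existence of the non-contractible cycle inside $I$ and in the fact that the initial exploration starts with no unexplored disk -- a genuinely topological input, which is exactly the missing piece you asked for.
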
 
One easily computes that the number of edges $m$ of a triangulation $T$ of a surface of Euler genus $k$ is $3n-6+3k$.  So while triangulations of Euler genus less than $2$ simply have too few edges to satisfy the conjecture, in~\cite{Bar-06} the conjecture is proved for the case $k=2$, i.e., the torus and the Klein bottle. Moreover, they show that any triangulation $T$ of a surface has an orientation such that each outdegree is divisible by $3$, i.e, in order to prove the full conjecture they miss the property that there are no sinks.

Bar\'at and Thomassen's conjecture was originally motivated in the context of claw-decompositions of graphs, 
since given an orientation with the claimed properties the outgoing edges of each vertex can be divided into claws, such that every vertex is the center of at  least one claw. 

Another motivation for this conjecture %not mentioned by Bar\'at and Thomassen
is, that it can be seen as a step towards the generalization
of planar Schnyder woods to higher genus surfaces. 
A \emph{Schnyder wood}~\cite{Sch-89} of a planar triangulation is an orientation and a 
$\{0,1,2\}$-coloring of the \emph{inner} edges satisfying the following \emph{local rule} on every \emph{inner} vertex $v$: 
going counterclockwise around $v$ one successively
crosses an outgoing $0$-arc, possibly some incoming $2$-arcs, an
outgoing $1$-arc, possibly some incoming $0$-arcs, an outgoing $2$-arc, and possibly some incoming $1$-arcs until coming back to the outgoing $0$-arc. 

% See Figure~\ref{fig:localrule} for an illustration.
% 
% 
%  \begin{figure}[htb]
%  \begin{center}
%  \includegraphics[width=.3\textwidth]{}
%  \caption{The local rule in a Schnyder wood of a triangulation.} 
%  \label{fig:localrule}
%  \end{center}
%  \end{figure}

Schnyder woods are one of the main tools in the area of planar graph representations and Graph Drawing. They provide a machinery to construct space-efficient straight-line drawings~\cite{Sch-90,Fel-01}, representations by touching \textsf{T} shapes~\cite{Fra-94}, they yield a characterization of planar graphs via the dimension of their vertex-edge incidence poset~\cite{Sch-89,Fel-01}, and are used to encode triangulations efficiently~\cite{Ber-09}. 
In particular, the local rule implies that every Schnyder wood gives an orientation of the inner edges such that every inner vertex has outdegree $3$ and the outer vertices are sources with respect to inner edges. Indeed, this is a one-to-one correspondance between Schnyder woods and orientations of this kind. As a consequence, the set of Schnyder woods of a planar triangulation inherits a natural distributive lattices structure, which in particular provides any planar graph with a unique \emph{minimal} Schnyder wood~\cite{Fel-04}. These unique representatives are an important tool in proofs and lie at the heart of many enumerative results, see for instance~\cite{Ber-07}.

When generalizing Schnyder woods to higher genus one has to choose which of the properties of planar Schnyder woods are desired to be carried over to the more general situation. Examples are: the efficient encoding of triangulations on arbitrary surfaces~\cite{Cas-09} and  the relation to orthogonal surfaces and small grid drawings for toroidal triangulations~\cite{Gon-14}, which lead to different definitions of generalized Schnyder woods. In~\cite{Gon-14}, the generalized Schnyder woods indeed satisfy the local rule with respect to \emph{all} edges and vertices of a toroidal triangulation and henceforth lead to orientations having outdegree $3$ at every vertex.
An interesting open problem is to generalize the local rule to
triangulations with higher Euler genus in such a way that for some
vertices the sequence mentioned in the local rule occurs several times around
the vertex. Here, the mere existence of such objects is an open question. Clearly, such a generalized Schnyder wood would yield an orientation as claimed by the conjecture. Thus, proving the conjecture of Bar\'at and Thomassen is a first step into that direction.

\section{Preliminaries}

A map $M$ on a surface $\mathbb{S}$ is characterized by a triple
$(V(M),E(M),F(M))$, formed by the vertex, edge and face sets of
$M$. In the following we will restrict to triangulations $T=(V(T),E(T),F(T))$, i.e. the pair $(V(T),E(T))$ is a simple embedded graph such that every face is incident to exactly three edges.

A \emph{submap} $M'$ of $T$, is a triplet $(V',E',F')$ where
$V'\subseteq V(T)$, $E'\subseteq E(T)$ and $F'\subseteq F(T)$. Note that a submap is not a map. 
A submap $M'=(V',E',F')$ is \emph{closed} if:
\begin{itemize}
\item[-] $uv \in E'$ implies $\{u,v\} \subseteq V'$, and
\item[-] $f \in F'$ implies $e'\in E'$ for any edge $e$ incident to $f$.
\end{itemize}
The \emph{closure} $cl(M')$ of a submap $M'$ (of $T$) is the smallest
closed submap of $T$ containing $M'$. The \emph{boundary} $\partial
M'$ of a submap $M'$ is the set of edges in $cl(M')$ that are incident
to at most one face in $M'$.

In a submap $M'$ of $T$ a \emph{(boundary) angle} $\widehat{e_0ve_t}$
at vertex $v$ is an alternating sequence $(e_0,f_1, e_1, \ldots,
f_t,e_t)$, for some $t\ge 1$, of edges and faces incident to $v$ (in
$T$) and such that:
\begin{itemize}
\item[-] the faces $f_i$ are mutually different, for $1\le i\le t$,
\item[-] each face $f_i$, for $1\le i\le t$, is incident to edges $e_{i-1}$ and $e_i$,
\item[-] both edges $e_0$ and $e_t$ belong to $cl(M')$,
\item[-] but none of the remaining edges, $e_i$ for $0<i<t$, belong to
  $cl(M')$, nor any faces $f_i$, for $0<i\le t$.
\end{itemize}
Here the angles we consider are not directed: the sequence
$(e_t,f_t,\ldots,e_1,f_1,e_0)$ defines the same angle as $(e_0,f_1,e_1,\ldots,f_t,e_t)$. Note that in $(e_0,f_1,e_1,\ldots,f_t,e_t)$ it can occur that $e_0=e_t$. Consider for example a submap consisting of a single edge. Let us mention, that this definition could be modified in order to include the angle around a vertex with respect to a submap without edges. Since we will not consider this situation we prefer avoiding further technicalities.

%% Note that for a vertex $v$ of $M'$ with no incident faces in $M'$, if
%% $v$ has only one two incident edges in $M'$, the two angles at $v$ are
%% denoted similarly.

The notion of angles endows the boundary $\partial M'$ of $M'$ with some further structure. Note that an edge is in $\partial M'$ if and only if it is involved in (at least) one angle of $M'$. This
fact leads to the definition of the following relation. Two angles
$(e_0,f_1,e_1,\ldots,f_t,e_t)$ and $(e'_0,f'_1,e'_1,\ldots,f'_t,e'_t)$
are \emph{consecutive} on the boundary if $e_t=e'_0$ and
$f_t=f'_1$. As each angle $(e_0,f_1,e_1,\ldots,f_t,e_t)$ has two \emph{sides}: $e_0$ and $e_t$ , this relation leads to the
definition of \emph{boundary sequence}, that is a collection of circular sequences
of angles. Such a circular sequence is sometimes denoted by an
alternating sequence $(\hat{a_0},e_0,\hat{a_1},e_1,\ldots
,\hat{a_t},e_t)$ or simply by a sequence $(e_0,e_1,\ldots ,e_t)$,
where $e_i$ is the common edge of $\hat{a_i}$ and $\hat{a_{i+1}}$. Note that an edge $e$ may appear twice in the boundary sequence, e.g. if $e$ is a bridge of $M'$. Thus, if necessary we will refer to a specific \emph{occurrence} of $e$ in  $\partial M'$.
For simplicity, we denote the boundary sequence of $M'$ by $\partial M'$. 

In the following, a \emph{disk} is a submap $M'$ of $T$ if it
is homeomorphic to an (open or closed) topological disk.  Furthermore,
a disk is a \emph{$k$-disk} if its boundary is a cycle with $k$
edges. A $3$-disk is called \emph{trivial} if it contains only one
face. A disk is called \emph{chordless} if its outer vertices (i.e. on
its boundary) induce a graph that is a (chordless) cycle. A cycle is \emph{contractible} if it is the boundary of a disk otherwise it is called \emph{non-contractible}.% A submap is called \emph{non-contractible} if it contains a anon-contractible cycle.

Given a triangulation $T$ and a set of vertices $X\subseteq V(T)$, the
\emph{induced submap} $T[X]$ is the submap with vertex set $X$, edge
set $\{uv\in E(T)\ |\ u\in X \mbox{ and } v\in X\}$, and face set
$\{uvw\in F(T)\ |\ u\in X, v\in X, \mbox{ and } w\in X\}$. Note that
induced submaps are always closed submaps.

Given an induced submap $M'=T[X]$ of a triangulation $T$, and any
occurrence of an edge $ab$ in $\partial M'$ (corresponding to angles
$\hat{a}$ and $\hat{b}$) there exists a unique vertex $c$ such that
there is a face $abc$ in $T\setminus M'$ that belongs to both angles
$\hat{a}$ and $\hat{b}$.  For any such vertex $c$ (and $ab \in
\partial M'$) we define the operation of \emph{stacking} $c$ on $M'$,
as adding $c$ to $X$, i.e., going from $M'=T[X]$ to $M''=T[X+c]$. In
such stacking, let $M'\cap cl(M''\setminus M')$ be the
\emph{neighborhood} of $c$ in $M'$. As $T$ is simple, note that this
neighborhood is either a cycle or a union of paths, one of which with
at least one edge (the edge allowing the stacking), and let us
respectively call them the \emph{neighboring cycle} and the
\emph{neighboring paths} of $c$ in $M'$. See Figure~\ref{fig:stack}
for an illustration.

 \begin{figure}[htb]
 \begin{center}
 \includegraphics[width=\textwidth]{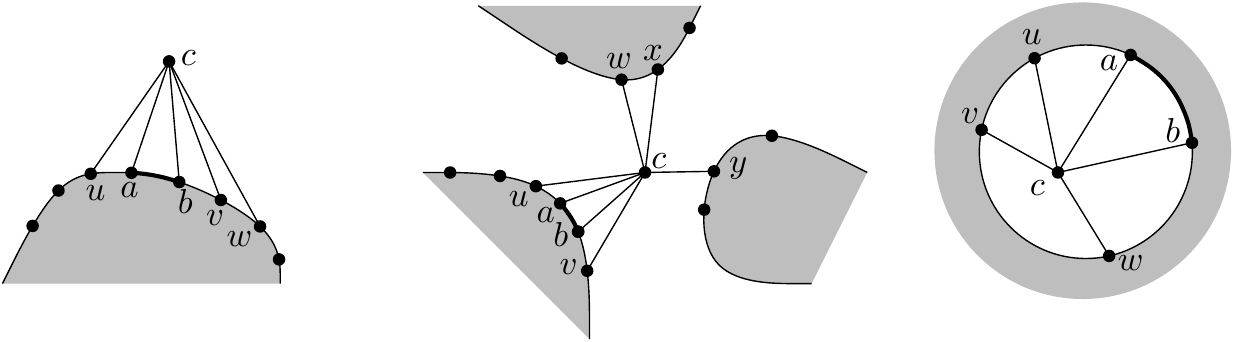}
 \caption{Different scenarios of stacking $c$ to $M'$. Left: one neighboring path $P_1=(u, a, b, v, w)$. Middle: three neighboring paths $P_1=(u, a, b, v)$, $P_2=(w, x)$, $P_3=(y)$. Right: A boundary cycle $C=(u, v, w, b, a)$.}
 \label{fig:stack}
 \end{center}
 \end{figure}

\section{Proof of Conjecture~\ref{conj}}

Let us consider for contradiction a minimal counterexample $T$. Note
that $T$ does not contain any non-trivial $3$-disk $D'$. Otherwise
we would remove the interior of $D'$ and would replace it by a face.
By minimality of $T$, this new triangulation would admit an
orientation such that every vertex has non-zero outdegree divisible by $3$. As $D'$ is a planar triangulation, there exists an
orientation of its interior edges so that inner and outer vertices
have respectively out-degree $0$ and $3$. This is the case for
orientations induced by a Schnyder wood on these
triangulations~\cite{Sch-89}.  Then the union of these two
orientations would give us an orientation of $T$ with non-zero outdegrees divisible by three.
Let us now proceed by providing an outline of the proof.

%\subsection{Preliminaries and outline\daniel{or just: Outline}}
\subsection{Outline}

We first prove that one can partition the edges of the triangulation
$T$ into the following graphs:
\begin{itemize}
 \item The \emph{initial graph $I$}, which is an induced submap 
   containing a non-contractible cycle. Furthermore, $I$ contains an edge
   $e^*= \{u,v\}$ such that the map $I\setminus e^*$ is a disk $D$ whose
   underlying graph is a maximal outerplanar graph with only two degree
   two vertices, $u$ and $v$. See Figure~\ref{fig:I_sans-arcs} for an illustration.

 \begin{figure}[htb]
 \begin{center}
 \includegraphics[width=.9\textwidth]{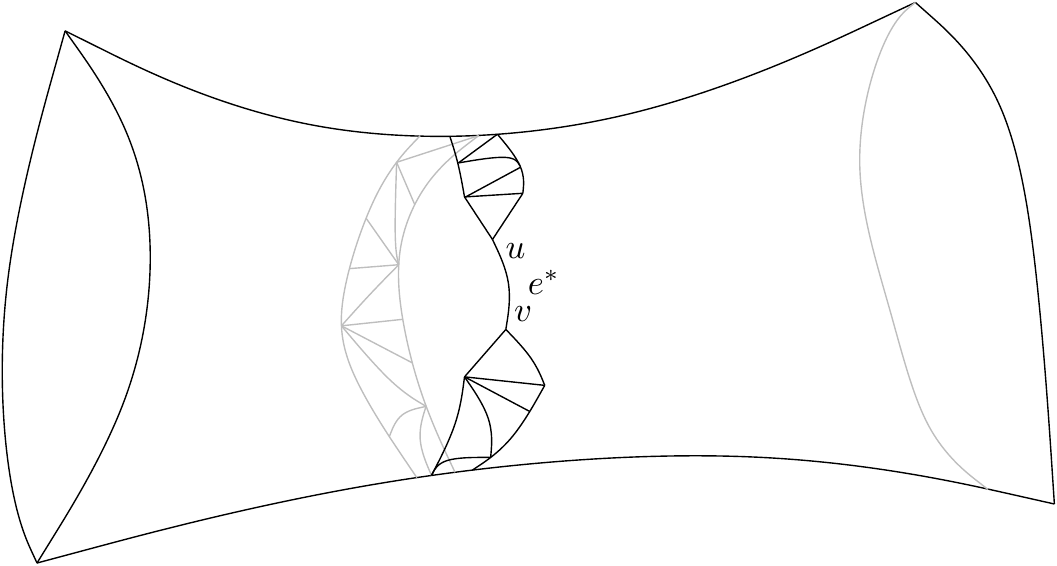}
 \caption{Example of a submap $I$.} 
 \label{fig:I_sans-arcs}
 \end{center}
 \end{figure}
 
%% such that for each of the two $(u,v)$-path of $\partial D$, each of
%% its interior vertices has a neighbor in the interior of the other
%% path.
%% %% , which is a non-contractible induced submap consisting of a disk $D$
%% %% (i.e. simple boundary), inducing a single chord $e^*=\{u,v\}$
%% %% connecting two degree $2$ vertices of $D$, which splits the boundary
%% %% cycle of $D$ into two $(u,v)$-paths called \emph{boundary paths} so
%% %% that each vertex in either of them has a neighbor in the other
%% %% one,\kolja{is $I$ the map or just its vertex-set?}
 \item The \emph{correction graph $B$} (with blue edges in the
   figures), which is oriented acyclically in such a way that
   each vertex of $V(T)\setminus V(I)$ has outdegree 2, while
   the other vertices have outdegree $0$,

 \item The \emph{last correction path $G$} (with green edges in the
   figures), which is a $\{u,v\}$-path.%   in $T\setminus E[I]$.%\daniel{We should not orient its edges, when     constructing $G$.}

 %% \item the \emph{last correction path $G$}, which is a path in
 %%   $T_X\setminus E[I]$ forming a contractible cycle with $e^*$ and
 %%   having a distinguished vertex $v^*$, that together with $e^*$
 %%   splits $G$ into a $(v,v^*)$-path and a $(u,v^*)$-path so that each
 %%   vertex in either of them has an edge inside the closed disk defined
 %%   by $G\cup e^*$ to a vertex in the other path.

 \item The \emph{non-zero graph $R$} (with red edges in the figures),
   which is oriented in such a way that all vertices in
   $\left(V(T)\setminus V(G)\right)\cup\{u,v\}$ have out-degree at least $1$.
\end{itemize}
%% \daniel{slight difference with ``all except interior vertices of
%%   the path $G$''}
The existence of such graph $I$ is proven in Section~\ref{sub:Iexist}, then
in Section~\ref{sub:BGRexist} we prove the existence of graphs $B$, $G$ and
$R$ (with the mentioned orientations). To do the latter we start from
$I$ and we incrementally conquer the whole triangulation $T$ by
stacking the vertices one by one (this is inspired by~\cite{Cas-09}).

Finally, the edges of $I$, $B$ and $G$ are (re)oriented, to obtain the
desired orientation.  The orientation of edges in $R$ does not change,
as they ensure that many vertices (all vertices of $T$ except the
interior vertices of the path $G$) have non-zero outdegree. The
$\{u,v\}$-path $G$ is either oriented from $u$ to $v$ or from $v$ to
$u$, but this will be decided later. However in both cases its
interior vertices are ensured to have non-zero outdegree. Hence all
vertices are ensured to have non-zero outdegree and it remains to
prove that they have outdegree divisible by $3$.

We start in Section~\ref{sub:orientallexceptIG} by reorienting the
$B$-arcs in order to ensure that vertices of $V(T)\setminus V(I)$ have
outdegree divisible by $3$ (as in~\cite{Bar-06}). In the last step,
in Section~\ref{sub:orientIG}, we choose the orientation of the $\{u,v\}$-path
$G$, and we orient $I$ in order to achieve the desired orientation.

%% In~\ref{sub:orientallexceptIG} we will show, how this
%% auxiliary structure allows us to find a desired orientations on
%% $T_X\setminus E[I]\setminus E(VII)$, which we then complete to an
%% orientation on the remaining arcs in

\subsection{Existence of $I$}\label{sub:Iexist}

To prove the existence of $I$, we first need the following lemma.

\begin{lemma}\label{lem:I1}
Any triangulation $T$ with Euler genus at least 2, has an induced
submap ${I}$ obtained from a disc ${D}$ by
stacking a vertex $v$, such that for any two neighbors $a, b$ of $v$
belonging to distinct neighboring paths (of $v$
w.r.t. ${D}$), every cycle $C$ in ${I}$ going
through edges $av$ and $vb$ is non-contractible.
\end{lemma}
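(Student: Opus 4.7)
I would construct $I$ by a greedy growth process on an induced disk $D \subseteq T$, stopping at the first moment a stackable vertex witnesses the statement. Initialize $D := T[\{x,y,z\}]$ for some face $xyz$ of $T$; since the surface has Euler genus at least $2$, it is not a topological disk, so $D$ is always a proper submap of $T$, and at every stage a stackable vertex $v \notin V(D)$ exists.

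While there exists a stackable $v$ whose neighborhood in $D$ is a single path with at least one edge and no additional components, I would stack it: this enlarges $D$ to $T[V(D) \cup \{v\}]$, still an induced disk, since a triangular fan has been glued to a sub-arc of $\partial D$. When the growth halts, every stackable vertex has at least two components in its neighborhood in $D$. I would pick such a $v$, tentatively set $I := T[V(D) \cup \{v\}]$, and test the non-contractibility condition of the lemma for $(D,v)$.

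If the test succeeds, we are done. Otherwise there exist $a, b$ in distinct neighboring paths of $v$ and a path $Q \subseteq D$ from $a$ to $b$ such that $C := (av, Q, vb)$ bounds a topological disk $D_C$ in $\mathbb{S}$. I would then enlarge $D$ using $D_C$: topologically, $D \cup D_C$ is two disks glued along $Q$, hence again a disk which now contains $v$, and from this region one extracts a new induced disk $D'$ of $T$ with $V(D) \subsetneq V(D')$. Restart the growth from $D'$; since $|V(D)|$ is bounded by $|V(T)|$ and strictly increases at every enlargement, the procedure must terminate at the test-succeeds step.

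\textbf{Main obstacle.} The delicate point is the enlargement when the test fails. The naive induced submap $T[V(D) \cup V(D_C)]$ may fail to be a disk because edges of $T$ with both endpoints in this set can lie outside the topological region $D \cup D_C$, producing chords that topologically spoil the disk structure. Overcoming this requires either a more careful definition of the enlarged disk (e.g.\ a maximal induced sub-disk of $T$ whose underlying space sits inside $D \cup D_C$) or an argument showing such chords cannot occur under the invariants maintained. Additional subtleties appear when $Q$ lies partly on $\partial D$, in which case the gluing $D \cup D_C$ has a different topological type and the enlargement may have to be carried out on the other side of $C$.
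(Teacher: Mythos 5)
Your plan rests on the same core idea as the paper's proof---grow a disk, stack a vertex on it, and use a contractible cycle through the stacked vertex to enlarge the disk---but the paper short-circuits the iterative machinery entirely by choosing $D$ to be a \emph{maximal} induced disk from the start (one exists because every face is an induced disk). With that choice the argument is a one-shot contradiction rather than a loop: if a contractible cycle $C$ through $av,vb$ existed, the disk $D'$ it bounds would contain a vertex outside $I$ (since $a,b$ lie in distinct neighboring paths, the angle at $v$ between $av$ and $vb$ away from $D$ contains a face not in $I$), and $(V(D)\cup V(D'))\setminus\{v\}$ would then induce a disk strictly larger than $D$, contradicting maximality. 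No restart is ever performed. One may also take $C$'s $(a,b)$-portion to lie on $\partial D$ without loss of generality, since any two $(a,b)$-paths inside the disk $D$ are homotopic rel endpoints, so all the cycles $avb+P$ are pairwise homotopic and hence simultaneously contractible or not.

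Concerning your flagged obstacle: the chord worry is not where the real difficulty lies. If $x,y\in (V(D)\cup V(D'))\setminus\{v\}$ and $xy\in E(T)$, then either both lie in $V(D)$, in which case $xy\in D$ because $D$ is induced, or at least one lies in the interior of $D'$, in which case the embedded arc $xy$ cannot cross the bounding cycle $C=\partial D'$ and therefore lies inside $D'$. The genuine gaps in your proposal are elsewhere: (a) you never argue that the extracted disk is \emph{strictly} larger than $D$, which requires noting that the disk bounded by $C$ contains a vertex not already in $I$; (b) you do not treat the case where the chosen stackable $v$ has a neighboring \emph{cycle} rather than several neighboring paths---the paper rules this out using Euler genus $\ge 2$; and (c) because of (a), your termination claim (``$|V(D)|$ strictly increases'') is unsupported. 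All three issues vanish simultaneously if you replace the greedy-growth-and-restart scheme by simply taking a maximal induced disk.
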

\begin{proof}
Any face of $T$ is an induced disk. Consider a maximal induced disk ${D}$ of $T$.  For any edge $ab$ of
$\partial {D}$, stack a vertex $v$ to $xy$. Let us denote by ${I}$ the map obtained by stacking $v$ to ${D}$. As $T$ has Euler genus at least $2$ the neighborhood of $v$ is not a cycle. Also, as ${D}$ is maximal, $v$ has at least two neighboring paths. %and denote by $P_1,\ldots,P_s$ the neighboring paths of $v$ with respect to ${D}$. As ${D}$ is maximum, we have that $s\ge2$. 
Assume for contradiction, that there is a contractible cycle $C$ of ${I}$
going through $av$, $vb$ (where $a$ and $b$ belong to distinct
neighboring paths of $v$ w.r.t. ${D}$) and through some $(a,b)$-path $P$
of $\partial {D}$. Denote ${D}'$ the disk bounded by $C$ and note that
(as ${I}$ is induced) ${D}'$ contains vertices not in ${I}$. Now it is clear
that $V({D})\cup \Int({D}')$ (i.e. $(V({D})\cup V({D}'))\setminus \{v\})$
induces a larger disk, contradicting the maximality of ${D}$.\hfill\qed
\end{proof}

% 
% \daniel{Note that $D$ ``maximal'' implies linearly computable... For
%   $v$ maybe we have to sort, $O(n\log n)$ ?}
% 

\begin{lemma}\label{lem:I2}
  Any triangulation $T$ with Euler genus at least 2, has an submap $I$ containing an non-contractible cycle, and an edge $e^*=
  \{u,v\}$ such that $I\setminus e^*$ is a disk ${D}$, and for
  each of the two $(u,v)$-paths of $\partial {D}$, all its interior vertices
  have a neighbor in the interior of the other $(u,v)$-path.
\end{lemma}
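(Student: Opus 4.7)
I would start with the submap $I_1 = D_1 \cup \{v_1\}$ provided by Lemma~\ref{lem:I1}: here $v_1$ is stacked on the maximal induced disk $D_1$, with neighboring paths $P_1, \ldots, P_k$ ($k \ge 2$) on $\partial D_1$, and any cycle in $I_1$ through two stacking edges $v_1 a, v_1 b$ on distinct neighboring paths is non-contractible. My aim is to convert this ``disk $+$ stacked vertex'' submap into a ``disk $+$ one edge'' structure, and then to shrink the disk by an extremal argument until the cross-neighbour condition is forced. Concretely, fix $P_1 = w_0 w_1 \cdots w_p$ and a neighbour $y$ of $v_1$ lying on some $P_l$ with $l \ne 1$; define $D$ to be $D_1$ enlarged by the vertex $v_1$, the edges $v_1 w_0, \ldots, v_1 w_p$, and the fan of triangular faces $v_1 w_j w_{j+1}$ for $0 \le j < p$. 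This glues a triangulated fan to $D_1$ along the boundary arc $P_1$, so $D$ is again a disk; on its new boundary $v_1, w_0, w_p$ sit on $\partial D$, while $w_1, \ldots, w_{p-1}$ become interior. Setting $e^* := v_1 y$, $u := v_1$, $v := y$, both endpoints lie on $\partial D$, $e^* \notin E(D)$, and the cycle formed by $e^*$, the edge $v_1 w_0$, and the $\partial D_1$-arc from $w_0$ to $y$ uses two stacking edges of $v_1$ on distinct neighboring paths, hence is non-contractible by Lemma~\ref{lem:I1}. So $I := D \cup \{e^*\}$ already meets the first two requirements of Lemma~\ref{lem:I2}.

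\textbf{Extremal step.} Among all valid pairs $(D, e^*)$ obtained in this way (letting $D_1, v_1, P_1, y$ vary, and allowing further absorption of boundary-incident faces as long as the ``disk $+$ edge'' structure and the non-contractibility are preserved), I would select one minimising $|V(D)|$. Assume for contradiction that some vertex $x$ in the interior of a $(u,v)$-path $P^+$ of $\partial D$ has no neighbour in the interior of the other path $P^-$. Let $x_-, x_+$ be the $\partial D$-neighbours of $x$ and $y_1, \ldots, y_\ell$ its remaining $D$-neighbours, so the faces of $D$ around $x$ form a fan $x x_- y_1, x y_1 y_2, \ldots, x y_\ell x_+$. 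Delete $x$, its incident edges in $D$, and these fan faces, to obtain $D'$; the boundary segment $x_- x x_+$ is replaced by $x_- y_1 \cdots y_\ell x_+$. An Euler-characteristic count shows $D'$ is again a topological disk, and since the two boundary segments cobound the deleted fan (itself a disk), the cycles $e^* \cup P^+$ and $e^* \cup P'^+$ are freely homotopic in the surface, so the latter remains non-contractible. As $|V(D')| = |V(D)| - 1$, this contradicts the minimal choice.

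\textbf{Main obstacle.} The delicate point is that the deletion produces a genuine topological disk only when all the $y_i$ are truly interior to $D$. If instead some $y_i$ already lies on $\partial D$, then the new boundary sequence repeats a vertex and $D'$ is a pinched disk rather than a disk, so the reduction is blocked. By the contradictory hypothesis such a $y_i$ must lie in $\mathrm{Int}(P^+) \cup \{u, v\}$ rather than in $\mathrm{Int}(P^-)$, so $x y_i$ is a chord of $P^+$ (or an edge from $x$ to $u$ or $v$). The core of the proof will be to handle these chord situations: one needs either to refine the extremal quantity (for instance, breaking ties by the number of boundary chords of $D$) or to perform an alternative reduction---typically cutting $D$ along $x y_i$ and retaining the smaller sub-disk that still carries a non-contractible cycle together with $e^*$. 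Arranging this case analysis so that every configuration genuinely decreases the extremal measure is where the main technical work of the proof lies.
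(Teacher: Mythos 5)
Your plan has the right skeleton, and your vertex-deletion step together with the free-homotopy observation is sound in the chordless case. But the chord situation you flag as the ``main obstacle'' is precisely what the lemma's proof must control, and you explicitly leave it open, so this is a genuine gap: the extremal reduction you describe does not terminate as stated.

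The paper closes it by choosing a different extremal set and by pre-empting chords before attempting any deletion. Rather than minimizing $|V(D)|$ over ``disk $+$ edge'' pairs with a fixed $e^*$, the paper takes $I$ minimal (by vertex count) among all induced submaps satisfying Lemma~\ref{lem:I1}'s conclusion — a disk $\widetilde D$ with a stacked vertex $v$ and the non-contractibility property — and only at the very end decides which neighboring path of $v$ becomes the fan inside $D$ and which edge $vu_j$ becomes $e^*$. Its Claim~1 shows that $\partial\widetilde D$ has no chord $xy$ for which some $(x,y)$-arc of $\partial\widetilde D$ contains both an edge $w_iw_{i+1}$ of a neighboring path of $v$ and a non-path neighbor $u_j$: cutting $\widetilde D$ along such a chord and keeping $v$ would yield a strictly smaller submap still satisfying Lemma~\ref{lem:I1}. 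This chord exclusion guarantees that the boundary vertex $x$ in Claim~2 has no chord at all, so its deletion genuinely produces a disk — exactly the step you could not complete — and it further forces $t=1$, because a second non-path neighbor $u_j$ would need a chord to realize the cross-adjacency that Claim~2 demands. The key design choice is that $e^*$ is not fixed during the minimization, so the chord cut is always legal: it preserves the ``disk $+$ stacked vertex'' structure and Lemma~\ref{lem:I1}'s non-contractibility, whereas in your formulation a chord could in principle separate the endpoints of $e^*$, a case your cut-and-retain sketch does not cover. A Claim~1-style chord exclusion, adapted to your extremal set, is the missing ingredient.
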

\begin{proof}
Among the induced subgraphs of $T$ that satisfy Lemma~\ref{lem:I1}
let $I$ be a minimal one. Let respectively $v$ be a vertex of $I$, and $\widetilde{D}$
be the disk $I\setminus \{v\}$ described in Lemma~\ref{lem:I1}. As $v$ is stacked on $\widetilde{D}$ let us denote
$(w_1,\ldots,w_s)$, with $s\ge 2$, some neighboring path of $v$, and
let us denote $u_1,\ldots, u_t$, with $t\ge 1$, the other neighbors of
$v$ in $\widetilde{D}$. Finally, let us denote $D$ the disk obtained from $\widetilde{D}$ by
adding vertex $v$, edges $vw_i$ for $1\le i\le s$, and faces
$vw_iw_{i+1}$ for $1\le i <s$. The minimality of $I$ implies all the
needed properties:

\begin{myclaim}\label{cl:I1}
  $\partial \widetilde{D}$ induces no chord $xy$ inside $\widetilde{D}$ such that some $(x,y)$-path of
  $\partial \widetilde{D}$ contains both an edge $w_iw_{i+1}$, for some $1\le
  i<s$, and a vertex $u_j$, for some $1\le j\le t$.
\end{myclaim}
\begin{proof}
If such chord $xy$ exists, let $D'\subsetneq \widetilde{D}$ be the disk with
boundary in $\partial \widetilde{D}+xy$ which contains both $w_iw_{i+1}$ and
$u_j$.  Then the graph induced by $V(D')\cup\{v\}$ contradicts the minimality
of $I$.\hfill\qed
\end{proof}
This implies that $\partial D$ has no chord at $u_j$, for all $1\le
j\le t$.

\begin{myclaim}\label{cl:I2}
  For all $1\le j\le t$, every interior vertex $x$ of a $(v,u_j)$-path
  of $\partial D$ is adjacent to an interior vertex of the other
  $(v,u_j)$-path.
\end{myclaim}
\begin{proof}
Let $P_1$ and $P_2$ be the $(v,u_j)$-path of $\partial D$ containing
respectively $w_1$ and $w_s$. Assume for contradiction, there exists
an inner vertex $x$ in $P_1$ having no neighbor in the interior of
$P_2$. By Claim~\ref{cl:I1} this implies that $\widetilde{D}$ (the map
induced by $V(I)\setminus \{v\}$) has no chord at $x$. Thus the map
induced by $V(\widetilde{D})\setminus\{x\}$ is a disk still containing
the vertex $u_j$ and the edge $w_{s-1}w_s$ on its border. Hence the
map induced by $V(I)\setminus\{x\}$ contradicts the minimality of
$I$.\hfill\qed
\end{proof}
As $\partial D$ has no chord at $u_j$, for all $1\le j\le t$, this
implies that $t=1$. This concludes the proof of the lemma.\hfill\qed
\end{proof}

In the beginning of the proof we have seen that by minimality, $T$
does not contain non-trivial $3$-disks. Hence by the properties of
$I$, if $D$ ($=I\setminus e^*$) would contain an inner vertex, this
vertex would be in a chordless $4$-disk of $D$. By the following
lemma this is not possible, hence $D$ is a maximal outerplanar
graph. Finally the adjacency property between vertices of $\partial
D\setminus\{u,v\}$ imply that $u$ and $v$ are the only degree two
vertices of $D$.

\begin{lemma}
The submap $D$ does not contain chordless $4$-disks.
\end{lemma}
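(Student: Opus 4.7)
The plan is to argue by contradiction, exploiting the minimality of the counterexample $T$. Suppose $D''\subseteq D$ is a chordless $4$-disk with boundary $a_1 a_2 a_3 a_4$. Since $T$ contains no non-trivial $3$-disk, the interior of $D''$ is nonempty and every interior vertex of $D''$ has degree at least $4$ in $T$. Since $D''$ is chordless, neither $a_1 a_3$ nor $a_2 a_4$ is an edge of $T$, so I can form a strictly smaller triangulation $T^*$ of the same surface by deleting $\Int(D'')$ from $T$ and filling the resulting quadrangular face with the two triangles obtained by inserting the chord $a_1 a_3$.

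By minimality of $T$, the smaller triangulation $T^*$ admits an orientation in which every outdegree is non-zero and divisible by $3$. Up to relabelling, I may assume the chord is oriented $a_1\to a_3$. The plan is then to lift this orientation back to $T$ by keeping all arcs outside $\Int(D'')$, removing the chord, and orienting the interior edges of $D''$ so that every inner vertex of $D''$ has outdegree exactly $3$, while, from interior edges alone, $a_1$ has outdegree $1$ and $a_2, a_3, a_4$ have outdegree $0$. Counting the interior edges of $D''$ (there are $3n''-11$ of them, where $n''=|V(D'')|$, and the $n''-4$ inner vertices account for $3(n''-4)$) shows these prescribed outdegrees sum correctly, so the assignment is numerically consistent.

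The main obstacle is proving that the prescribed orientation on the interior of $D''$ actually exists. I plan to rely on the classical criterion for $\alpha$-orientations in planar graphs: such an orientation exists iff $\sum_{v\in S}\alpha(v)\ge e_{\rm int}(S)$ for every $S\subseteq V(D'')$. The relevant inputs for the verification are chordlessness of $D''$ (which gives $e_{\rm int}(\{a_1,a_2,a_3,a_4\})=0$ and forces each $a_i$ to have at least one interior neighbour, otherwise $a_{i-1}a_{i+1}$ would be a chord) together with the lower bound of $4$ on interior degrees, which leaves enough slack for the $\alpha\equiv 3$ assignment on interior vertices to dominate $e_{\rm int}$ on any subset of interior vertices. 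Combined with the planarity bound $|E(D''[S])|\le 3|S|-6$, this should make the inequality go through by a routine case analysis on how $S$ meets $\{a_1,a_2,a_3,a_4\}$. If the specific pattern $(1,0,0,0)$ turned out to be blocked for some exceptional configuration, I would fall back on the distributive-lattice structure of such orientations and instead choose the chord as $a_2 a_4$ (equally available by chordlessness) to shift the required boundary pattern. Once the orientation on $\Int(D'')$ is produced, it glues with the orientation of $T^*$ to yield a valid orientation of $T$, contradicting that $T$ is a counterexample.
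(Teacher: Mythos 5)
Your overall strategy matches the paper's: replace the interior of the chordless $4$-disk by a single diagonal plus two triangles, invoke minimality of $T$ to get a good orientation of the smaller triangulation, and then extend back into the interior of the $4$-disk by an orientation with outdegree $3$ on inner vertices, outdegree $1$ on one boundary vertex, and $0$ on the others, so that the counts match up after removing the artificial diagonal. Two points need attention, however.

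First, you assert that chordlessness of $D''$ implies neither $a_1a_3$ nor $a_2a_4$ is an edge of $T$, so that $T^*$ is simple. This is not automatic: $D=I\setminus e^*$ is chordless as a submap, but $e^*$ itself is still an edge of $T$ (and of $I$), so one of the two diagonals of $D''$ could coincide with $e^*$. The paper handles this by observing that at most one diagonal can be $e^*$, and explicitly choosing the other one; combined with $I$ being an \emph{induced} submap with no non-trivial $3$-disk, this ensures the chosen diagonal is genuinely absent from $T$. Your proposal needs this case distinction; otherwise $T^*$ may fail to be a simple triangulation.

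Second, the existence of the prescribed orientation of $\Int(D'')$ is the crux, and your proposal only sketches it. You propose to verify the Hall-type inequality $\sum_{v\in S}\alpha(v)\ge e_{\rm int}(S)$ by a ``routine case analysis,'' and you offer a vaguely stated fallback (the distributive-lattice remark does not actually produce a different outdegree vector, so it is not a fallback for the claim). The paper instead gets the orientation for free from the known Schnyder-wood result: form the planar triangulation $D''$ plus the \emph{other} diagonal, take the outer face to be the triangle not containing your distinguished vertex, apply the Schnyder orientation (outer vertices outdegree $0$, inner vertices outdegree $3$), and observe that the two boundary edges leaving the distinguished vertex are forced to be outgoing, so after discarding the artificial diagonal its interior outdegree is exactly $1$. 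This is shorter and fully rigorous, whereas your flow-criterion route would need the inequality actually checked for all $S$, which you have not done. As written, your proposal has the right skeleton but both of these steps are genuine gaps.
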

\begin{proof}
If $D$ would contain such a disk $D'$, with boundary
$(v_1,v_2,v_3,v_4)$, we would remove the interior of $D'$ and we would
add one of the two possible diagonals, say $v_2v_4$ (if $v_2v_4$ ar not $e^*$'s ends), and the corresponding two triangular faces, $v_1v_2v_4$ and
$v_2v_3v_4$. The obtained map $T'$ is defined on the same surface as
$T$ and is smaller. Furthermore as $I$ is an induced submap without
non-trivial $3$-disk and as $v_2v_4 \neq e^*$, there was no edge
$v_2v_4$ in $T$.  Hence $T'$ is simple and it is a triangulation. Now
by minimality of $T$, this new triangulation $T'$ has an
orientation such that every vertex has non-zero outdegree divisible by $3$. Let us suppose without loss of generality that in this
orientation the edge $v_2v_4$ is oriented from $v_2$ to $v_4$.

Using the fact that for any planar triangulation, there exists an
orientation of the interior edges such that inner and outer vertices
have respectively out-degree $0$ and $3$~\cite{Sch-89}, one can orient the
inner edges of $D'$ in such a way that inner vertices, vertex $v_2$,
and vertices $v_1$, $v_3$ and $v_4$ have respectively out-degree $3$, $1$
and $0$. For this consider the orientation of the triangulation
$D'+v_1v_3$, with outer face $v_1v_3v_4$, and notice that the edges
$v_2v_1$ and $v_2v_3$ are necessarily oriented from $v_2$ to $v_1$ and
$v_3$ respectively (as $v_1$, $v_3$ and $v_4$ have outdegree $0$).

Then the union of these orientations, of $T'\setminus v_2v_4$ and of
$D'$'s inner edges, would give us an orientation of $T$ with non-zero
outdegrees divisible by three.\hfill\qed
\end{proof}

\subsection{Existence of $B$, $G$, and $R$}\label{sub:BGRexist}

As mentioned in the outline, we will start from $I$ and we
incrementally explore the whole triangulation $T$ by stacking the
vertices one by one. At each step, we will assign the newly explored
edges to $B$, $G$ or $R$, and we will orient those assigned to $B$ or
$R$.  At each step the \emph{explored part} is a submap of $T$ induced
by some vertex set $X$. The explored part is hence the submap denoted
$T[X]$ with boundary $\partial T[X]$. The \emph{unexplored part} is the
submap $T\setminus T[X]$, and it may consist of several components.

At a given step of this exploration, the graph $G$ may not be an
$\{u,v\}$-path yet.  In such a case, the graph $G$ will consist of two
separate paths $G_u$ and $G_v$, respectively going from $u$ to $u'$,
and from $v$ to $v'$, for some vertices $u'$ and $v'$ on $\partial
T[X]$. Here the vertices $u'$ and $v'$ may respectively coincide with
vertices $u$ and $v$, if $G_u$ or $G_v$ is a trivial path on just one
vertex. In such a case, the vertices $u'$ and $v'$ are called the
\emph{current ends} of $G$.

During the exploration we maintain the following invariants:
\begin{itemize}
 \item[(I)] The graphs $I$, $B$, $G$, and $R$ partition the edges of $T[X]$.
 \item[(II)] All interior vertices of $T[X]$ have at least one outgoing
   $R$-arc, or two incident $G$-edges.
 \item[(III)] The graph $B$ is acyclically oriented in such a way that
   the vertices of $I$ have outdegree 0, while the other vertices of
   $T[X]$ have outdegree 2.
%% \daniel{This is implied by former (VI)}
%%  \item[(IV)] The graph $G$ is either a $\{u,v\}$-path or its current ends appear
%%    consecutively on $\partial T[X]$, i.e. $u'v' \in \partial T[X]$.
 \end{itemize}
Furthermore, to help us in properly finishing the construction of the
graphs $B$, $G$ and $R$ in the further steps, we introduce the notion
of \emph{requests} on the angles of $\partial T[X]$. Informally, a
$G$-request (resp. an $R$-request) for an angle $\hat{a}$ means that
in a further step an edge inside this angle will be added in $G$
(resp. in $R$ and oriented from $a$ to the other end). Every angle has
at most one request, and an angle having no request is called
\emph{free}.
 \begin{itemize}
 \item[(IV)] Every vertex of $(\partial T[X] \setminus \{u',v'\})
   \cup\{u,v\}$ having (still) no outgoing $R$-arc, has an incident
   angle with an $R$-request.
 \item[(V)] If $G$ is not a $\{u,v\}$-path (yet), its current ends, $u'$ and $v'$,
   have one incident angle each, say $\hat{u}'$
   and $\hat{v}'$, that are consecutive on $\partial T[X]$, and that
   have a $G$-request.
 \item[(VI)] If there is an unexplored disk $D'$, i.e. a component of
   the unexplored part that is a disk, then there are at least three
   free angles (of $\partial T[X]$) around $D'$.
\end{itemize}

This exploration starts with $T[X]=I$. In this case as all the edges
of $T[X]$ are in $I$ and as there are no interior vertices yet, (I),
(II) and (III) are trivially satisfied. Since the Euler genus of $T$ is
at least $2$ there is no unexplored disk, hence (VI) is satisfied.
Since $e^*=uv$ appears twice in $\partial T[X]$, the vertices $u,v$
appear twice consecutively in $\partial T[X]$. To achieve (V), choose
the angles of one consecutive appearance of $u,v$ as $G$-requests. To
achieve (IV), all the other angles are assigned $R$-requests. See
Figure~\ref{fig:I} for an illustration.

 \begin{figure}[htb]
 \begin{center}
 \includegraphics[width=.9\textwidth]{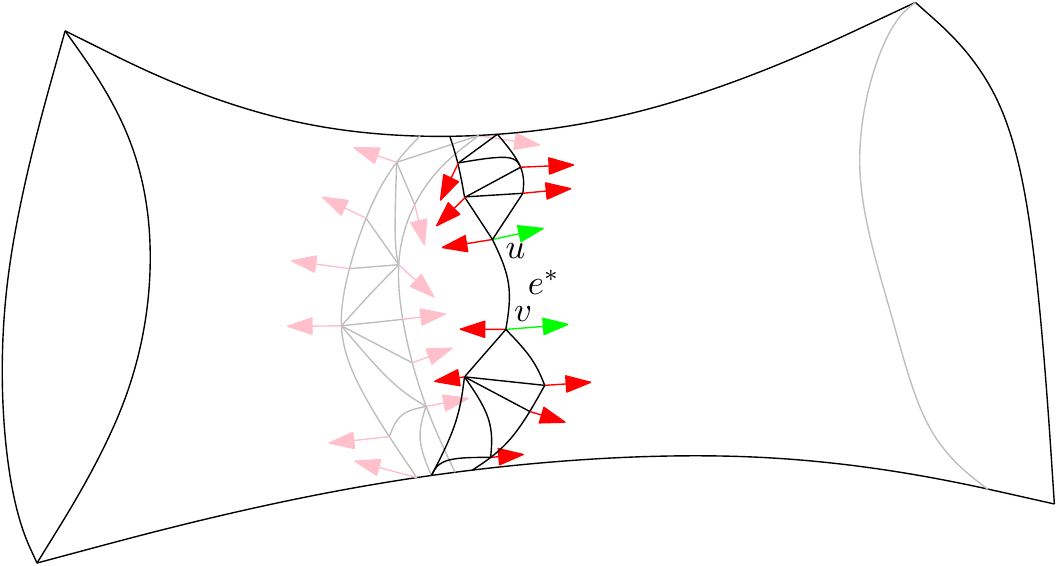}
 \caption{Assigning requests to $I$ in order to satisfy the invariants.} 
 \label{fig:I}
 \end{center}
 \end{figure}

For the rest of the construction in each step we enlarge the explored map $T[X]$ by stacking a vertex $x$ to $e\in\partial T[X]$. The vertex $x$ is chosen according to the following rules:
\begin{itemize}
\item[(i)] Not both ends of $e$ have a $G$-request.
\item[(ii)] If $x$ belongs to an unexplored disk $D$, either $x$ is adjacent
  to all vertices of $\partial D$ or $x$ has exactly one neighboring
  path $P$ on $\partial D$ such that $P$ does not contain all the
  free angles of $\partial D$.
\item[(iii)] In the case $x$ does not belong to an unexplored disk, if
  possible we choose $x$ such that no unexplored disk is created.
\end{itemize} 
Using that $T$ has no non-trivial $3$-disks, one can easily check that choosing such a vertex $x$ is always possible.
% 
% For the rest of the construction in each step we stack an unexplored
% vertex $x$ to a non-trivial subsequence $P$ of $\partial E$, such that
% $P$ does not consist only of the two consecutive $G$-requests. We add
% all edges induced by $x$ and $E$. If possible we choose $x$ such that
% no unexplored disk is created. We also avoid splitting an unexplored
% disk into several unexplored regions (disks).

In the following we show how to extend $B, G, R$ on the new introduced
edges and how to deal with the newly created angles to maintain all
invariants valid. We will describe the construction and we will
check the validity of invariants only for the non-trivial ones.
We distinguish cases according to the topology of the unexplored
region containing $x$.\\

\noindent
{\bf 1) The vertex $x$ is contained in an unexplored disk $D$ and has a
  neighboring cycle.} By (VI) the unexplored disk containing $x$ has
at least $3$ free angles. We orient the corresponding edges from $x$
to its neighbors, put two into $B$ and the rest into $R$. All non-free
angles satisfy their request with the edge incident to $x$.

We have assigned all the newly explored edges, hence (I) remains
valid.  As (IV) and (V) were valid in $T[X]$, all the neighbors of $x$
(i.e. the vertices around $D$) have now (in $T[X+x]$) an outgoing
$R$-arc or two incident $G$-edges. The vertex $x$ also does, hence
(II) is valid.  In the acyclic graph $B$, adding the vertex $x$ with
only outgoing $B$-arcs cannot create any circuit, hence (III) remains
valid. As in this case, as $\partial T[X+x]$ is included in $\partial
T[X]$, (IV) remains valid. If $u'$ and $v'$ were around $D$ in $T[X]$,
the two parts of $G$ are now connected by the adjunction of $xu'$ and
$xv'$ in $G$. Otherwise, $G$ was already an $\{u,v\}$-path, or $u'$
and $v'$ were elsewhere in $\partial T[X]$ fulfilling (V). Hence in
any case (V) holds. Finally, as no unexplored disk has been created
and as the requests around existing unexplored disks have not changed,
(VI) remains valid.

\smallskip

For the remaining cases we introduce some further notation. Given a
neighboring path $P = (p_1,\ldots,p_s)$ of $x$, with corresponding
angles $\hat{p_1},\ldots,\hat{p_s}$, the {\em inner angles} are the
angles $\hat{p_i}$ with $1<i<s$. The other ones are the \emph{outer angles}. 
An inner angle with an $R$- or
$G$-requests, has to satisfy its constraint (this cannot be further
delayed). Hence for any inner angle $\hat{p_i}$ with an $R$-request
(resp. a $G$-request) we add the edge $xp_i$ to $R$ (resp. to $G$)
oriented towards $x$. This is a preprocessing step valid for both the remaining two cases.\\

\noindent
{\bf 2) The unexplored region containing $x$ is not a disk.}  For
simplicity assume, that there are no free angles. Otherwise we assign
an $R$-request to all these angles. Here after the preprocessing step described above, there is an intermediate step 2.1) and a final step 2.2). See Figure~\ref{fig:notinadisk} for an illustration of how this case is handled.\\

 \begin{figure}[htb]
 \begin{center}
 \includegraphics[width=\textwidth]{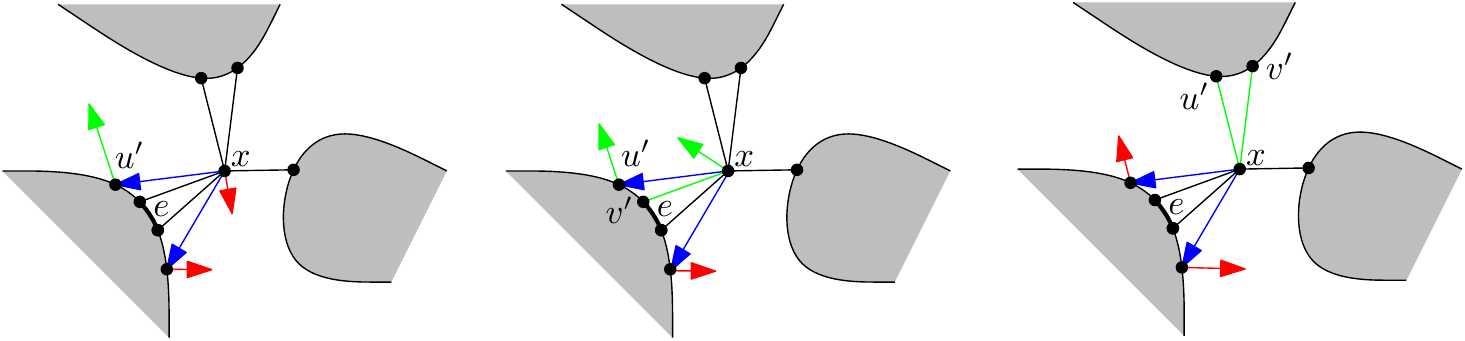}
 \caption{Dealing with the case that $x$ is not in an unexplored disk.} 
 \label{fig:notinadisk}
 \end{center}
 \end{figure}

\noindent
{\bf 2.1) The intermediate step.} This step depends on the position of
the $G$-requests, if any.

\noindent
{\bf If there is no $G$-request on the neighboring paths of $x$}, then we
assign an $R$-request to some angle $\hat{x}$ incident to $x$.

\noindent
{\bf If only one $G$-request (say on $\hat{u}'$) is on a neighboring path
of $x$}, then $\hat{u}'$ is an end of this neighboring path.
Here the new angle at $u'$ (inside the former angle $\hat{u}'$) that is created by stacking
$x$ inherits $\hat{u}'$'s $G$-request. If two angles are created inside the former angle $\hat{u}'$, that is if $u'$ is alone in its neighboring path, we choose the angle next to $v'$ in order to fulfill (V). Then we assign an $R$-request to some angle $\hat{x}$ incident to $x$.

\noindent
{\bf If one $G$-request say $\hat{u}'$ is on an outer angle and the other
one $\hat{v}'$ on an inner one}, we have added the edge $v'x$ to $G$ in the preprocessing.
Here the new angle at $u'$ inherits $\hat{u}'$ 's $G$-request and the next angle on $\partial T[X+x]$, that is incident to $x$ gets a $G$-request too.

\noindent
{\bf If both $G$-requests are on inner angles}, the edges $v'x$ and $u'x$ have
been added to $G$ in the preprocessing. Hence $x$ has already two incident $G$-edges and does not need any request around. We thus leave all angles incident to $x$ free.

\noindent
{\bf If both of the $G$-requests are on outer angles}, then by (i) $x$ has
one length one neighboring path $(u',v')$, and at least one other
neighboring path of length at least one. In that case, we add edges
$v'x$ and $u'x$ to $G$ and we leave the new angles at $u'$ and $v'$, as well 
as all angles incident to $x$, free.\\

\noindent
{\bf 2.2) The final step.} We now assign two outgoing $B$-arcs to $x$, depending on the
$G$-requests. If there is an outer angle $\hat{u}'$ (in $T[X+x]$) with a $G$-request
add the arc $xu'$ directed towards $u'$ to $B$. The remaining one or
two needed $B$-arcs are chosen arbitrarily among the edges from $x$
to outer vertices. All other edges, between $x$ and outer vertices
will be put into $R$ and directed towards $x$. Note that among the
newly created outer angles and the angles associated to $x$ there are
at most $3$ requests: two at the angles receiving a $B$-arc from $x$
and one at an angle incident to $x$.

In case, that adding $x$ creates an unexplored disk $D'$, we still have to argue, that (VI) is satisfied with respect to $D'$. We make use of the following:

\begin{myclaim}\label{cl:disks} 
For any unexplored disk $D'$ created by stacking a vertex $x$ on
$T[X]$, the vertex $x$ appears several times on the boundary of $D'$.
\end{myclaim}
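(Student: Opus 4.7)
My plan is to prove the claim by contradiction. Suppose $x$ appears exactly once on $\partial D'$; let the unique angle at $x$ be bounded by the two edges $xp$ and $xp'$, where $p$ and $p'$ are the endpoints of two neighboring paths of $x$ in $T[X]$ adjacent to this angle, and denote by $V_{j_0}$ the corresponding ``gap wedge'' inside $D'$, i.e.\ the fan of faces of $D'$ incident to $x$.

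First I would show that $\tilde D := cl(D') \cup \overline{St(x)}$, where $\overline{St(x)}$ is the closed star of $x$ in $T$, is a topological disk with $x$ in its interior. Both $cl(D')$ and $\overline{St(x)}$ are disks, and their intersection is exactly the closed fan $cl(V_{j_0})$, which is again a disk. Mayer--Vietoris then gives $\chi(\tilde D) = 1 + 1 - 1 = 1$, a Van Kampen argument gives simple connectedness, and one checks directly that $\partial \tilde D$ is a single cycle, obtained by concatenating the portion of $\partial D'$ that avoids $x$ with the ``long'' arc of $\text{link}(x)$ that avoids the gap $V_{j_0}$. Hence $\tilde D$ is a disk, and $x$ lies in its interior (since it was interior to $\overline{St(x)}$).

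Next I would observe that pre-stacking $\tilde D \subseteq C$, where $C$ is the component of the unexplored region containing $x$: indeed $\overline{St(x)} \subseteq C$ because $x$ was interior to $C$, and every face of $D'$ was already unexplored and connected via $V_{j_0}$ to $x$, so it also lies in $C$. Since we are in Case 2, where $C$ is not a disk, this inclusion is strict: $\tilde D \subsetneq C$.

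The main obstacle will be turning the strict inclusion $\tilde D \subsetneq C$ into a contradiction. The approach is to analyze $\partial \tilde D$ by splitting its edges into those lying on $\partial C$ pre-stacking (the $T[X]$-edges from $\partial D'$ together with the edges inside the $r$ neighboring paths of $x$) and those interior to $C$ (the $r-1$ arcs of $\text{link}(x)$ through the remaining gaps $V_j$ with $j\ne j_0$). Each interior arc separates $\tilde D$ from a piece $C_i$ of $C \setminus \tilde D$ which, post-stacking, becomes a distinct component of the unexplored region on whose boundary the vertex $x$ likewise appears exactly once, via its unique gap $V_j$. The contradiction should then follow either by applying the same ``single-angle forces non-disk'' analysis recursively to each $C_i$ that is a disk, or by directly combining the Euler-characteristic tally $\chi(C) = \chi(\tilde D) + \sum_i \chi(C_i) - (r-1)$ with the hypothesis that $C$ is not a disk, in order to force some $C_i$ to itself be a disk on which the assumed single occurrence of $x$ produces the same configuration — eventually cornering $C$ into being a disk. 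The delicate technical step is setting up this recursion or induction cleanly (most plausibly by induction on the number $r$ of neighboring paths, or on a suitable combinatorial measure of the complexity of $C$) so that the argument terminates without circularity.
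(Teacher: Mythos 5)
Your approach is entirely different from the paper's, and unfortunately it is the paper's approach that makes the claim easy. The paper does \emph{not} try to prove a topological fact about an arbitrary stacking: it uses the freedom in choosing the stacked vertex. The rules (i)--(iii) only partially determine $x$, and the paper's proof adds an implicit tie-breaking rule (choose $x$ so that any created unexplored disk has as few faces as possible) and then runs an exchange argument: if $D'$ is a disk created by stacking $x$ with $x$ appearing only once on $\partial D'$, then because there is no non-trivial $3$-disk, $D'$ has boundary length at least~$4$ and hence contains an unexplored vertex $x'$ stackable to a subarc of $\partial D'\setminus x$; one can choose $x'$ to respect rule~(i), and stacking $x'$ either creates no disk or a strictly smaller one, contradicting the choice of $x$. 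That is the entire proof.

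Your proposal instead tries to show that the configuration is \emph{topologically} impossible, independently of the choice rules. That is a strictly stronger statement than what the paper needs, and your argument for it has two real gaps. First, the Mayer--Vietoris step assumes $cl(D')\cap\overline{St(x)}=cl(V_{j_0})$. This fails in general: the arc of $\partial D'$ avoiding $x$ lies along $\partial T[X]$ and can contain further neighbours of $x$ (for instance a single-vertex neighbouring path of $x$, i.e.\ an isolated neighbour $q$ with the bridge $xq$ having unexplored faces on both sides), so the intersection may pick up extra vertices and edges outside the gap wedge and need not be a disk, invalidating both the $\chi$ computation and the ``one checks directly'' description of $\partial\tilde D$. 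Second, and more seriously, you yourself flag that the concluding step is not carried out: the Euler-characteristic tally $\chi(C)=\chi(\tilde D)+\sum_i\chi(C_i)-(r-1)$ by itself does not produce a contradiction (it is perfectly consistent for $C$ to decompose into a disk $\tilde D$ and non-disk pieces $C_i$), and the ``recursion that corners $C$ into being a disk'' is not set up, has no stated well-founded measure, and it is not clear it can be made to terminate without implicitly re-deriving the very freedom-of-choice argument the paper uses. As it stands the proposal is a plan rather than a proof, and the plan is aimed at a harder target than necessary.
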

\begin{proof}
Suppose we create an unexplored disk $D'$ such that $x$ appears only
once on its boundary. Assume $x$ is chosen such that the number of
faces in $D'$ is minimized. Since there are no non-trivial $3$-disks the boundary of $D'$ is of length at least $4$. Therefore $D'$
contains an unexplored vertex $x'$ which could have been stacked to a
subsequence of $\partial D' \setminus x$. Furthermore, $x'$ can be
chosen such that the sequence not only contains the
$G$-requests. Hence, stacking $x'$ would either not create any
unexplored disk, or would create a smaller one, both contradicting the
choice of $x$.\hfill\qed
\end{proof}

This claim and the fact that $T$ is simple imply that there are at
least $6$ angles on the boundary of $D'$ incident to outer vertices of
the neighborings paths of $x$ (4 of them) or incident to $x$ (2 of
them).  As argued above at most $3$ of these angles have a
request. Thus, there are at least $3$ free angles on the boundary of
$D'$ and (VI) is satisfied.\\

\noindent
{\bf 3) The unexplored region containing $x$ is a disk, but $x$'s
  neighborhood is not a cycle.} By (ii) the vertex $x$ has only one
neighboring path. Let us denote this path by $P = (p_1,\ldots,p_s)$
for some $s\ge 2$ and $\hat{p_1}, \ldots, \hat{p_s}$ the corresponding
angles.  Denote by $t$ the number of free angles on $P$.  

We start with the preprocessing described above, that deals with
non-free interior angles (by fulfilling the requests).  To fulfill (VI) we have to
maintain the number of free angles in this unexplored disk above
three. Since by (ii) there is at least one free angle not on $P$, to
achieve this we need to have at least $\min \{t,2\}$ free angles among
the new angles $\hat{p_1}$, $\hat{x}$, and $\hat{p_s}$.

To achieve that we need to exploit free angles as follows. For any
free angle $\hat{p_i}$ (inner or not), the edge $xp_i$ is added either
to $B$ or to $R$, in both cases oriented towards $p_i$. Among these
$t$ angles, $\min\{t,2\}$ lead to a $B$-arc, and $\max\{0,t-2\}$ lead
to an $R$-arc.  It remains to deal with angles that are neither inner
nor free.  We proceed by distinguishing cases according to the
position of $G$-requests.

\noindent
{\bf If there is no $G$-request on $P$,} then if $t\le 2$ we assign an
$R$-request to the angle $\hat{x}$ and otherwise we leave $\hat{x}$ free.
Then we use $\max\{0,2-t\}$ of the non-free outer angles to add $B$-arcs
leaving $x$.  We satisfy the possibly remaining non-free outer angles (that are $\min\{2,t\}$),
by adding $R$-arc towards $x$, and leave their new incident angle free.
If $t\le 2$ (resp. $t\ge 3$), there are $\min\{2,t\}=t$ (resp. $1+\min\{2,t\}=3$)
free angles among the new angles $\hat{p_1}$, $\hat{x}$, and $\hat{p_s}$.

\noindent
{\bf If only one $G$-request (say on $\hat{u}'$) is on $P$,} then
$\hat{u}'$ is an end of $P$, say $p_1=u'$ (see
Figure~\ref{fig:notinadisk2}). Here the new angle at $u'$ inherits
$\hat{u}'$'s $G$-request, and we add the edge $xp_1$ in $B$ if $t\le
1$, or in $R$ otherwise. In both cases $xp_1$ is oriented towards
$p_1$.  If $t=0$ then $\hat{p_s}$ is not free (i.e. $\hat{p_s}$ has an
$R$-request), but we add $xp_s$ in $B$ oriented from $x$ to $p_s$ and
the new angle $\hat{p_s}$ inherits the $R$-request.  If $t\ge 1$, we
satisfy the $R$-request of $\hat{p_s}$ (if it has one) with edge
$xp_s$.  In any case, $\hat{p_s}$ having a request or not, the new
angle $\hat{p_s}$ is left free. If $t\le 1$ we assign an $R$-request
to angle $\hat{x}$ and otherwise we leave $\hat{x}$ free. Hence if
$t\ge 1$ the angle $\hat{p_s}$ is free, and if $t\ge 2$ the angle
$\hat{x}$ is free.

 \begin{figure}[htb]
 \begin{center}
 \includegraphics[width=\textwidth]{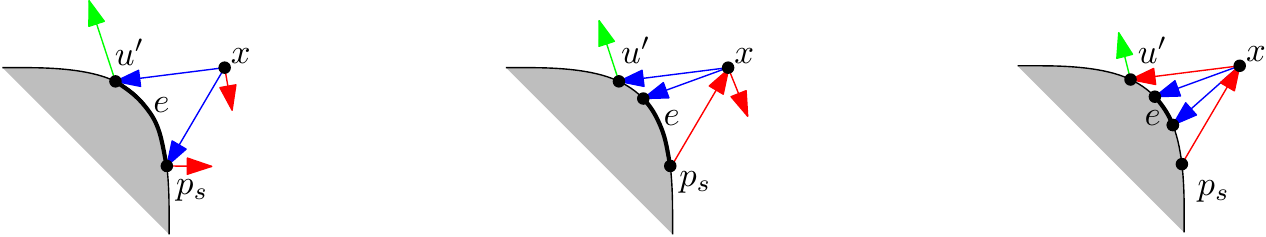}
 \caption{Case where there is only one $G$-request (on $\hat{u}'$) and where $\hat{p_s}$ has an $R$-request. The 3 subcases from left to right correspond to $t=0$, $t=1$, and $t=2$.}
 \label{fig:notinadisk2}
 \end{center}
 \end{figure}

\noindent
{\bf If one $G$-request say $\hat{u}'$ is on an outer angle and the other
one $\hat{v}'$ on an inner one,} say $u'=p_1$ and $v'=p_2$ with $s>2$,
we have added the edge $p_2x$ to $G$ (see Figure~\ref{fig:notinadisk3}). 
%% If $t\le 1$ we leave $\hat{p_1}$ and $\hat{x}$ with a $G$-request, we
%% add the edge $xp_1$ in $B$ oriented from $x$ to $p_1$.  If $t=0$ we
%% also add $xp_s$ in $B$ oriented from $x$ to $p_s$, and the angle
%% $\hat{p_s}$ keeps its $R$-request.  If $t=1$ we satisfy the
%% $R$-request of $\hat{p_s}$ (if it has one) by an $R$-arc from $p_s$ to
%% $x$, and hence $\hat{p_s}$ is free.
%%
%% If $t\ge 2$ we add the edge $p_1x$ to $G$, and we leave angles
%% $\hat{p_1}$ and $\hat{x}$ as free.  In that case, we satisfy the
%% $R$-request of $p_s$ if it has one.
%%
Around $p_1$, if $t\le 1$ we assign the new angles $\hat{p_1}$ and $\hat{x}$ a
$G$-request, and we add the edge $xp_1$ in $B$ oriented from $x$ to
$p_1$. Otherwise (i.e. $t\ge 2$) we add the edge $p_1x$ to $G$, and we
leave both new angles $\hat{p_1}$ and $\hat{x}$ as free.
Around $p_s$, if $t=0$ (hence $\hat{p_s}$ has an $R$-request) we add
$xp_s$ in $B$ oriented from $x$ to $p_s$, and the new angle
$\hat{p_s}$ keeps its $R$-request. Otherwise (i.e. $t\ge 1$), if
$\hat{p_s}$ has an $R$-request we add $xp_s$ in $R$ and orient it from
$p_s$ to $x$, and in any case ($\hat{p_s}$ having an $R$-request or
not) we leave the new angle $\hat{p_s}$ as free.
Hence if $t\ge 1$ the angle $\hat{p_s}$ is free, and if $t\ge 2$ both
$\hat{p_1}$ and $\hat{x}$ are free.

 \begin{figure}[htb]
 \begin{center}
 \includegraphics[width=\textwidth]{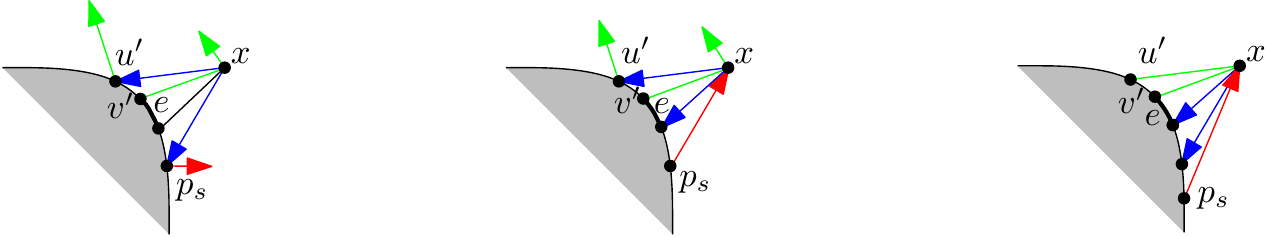}
 \caption{Case where there is one $G$-request on an outer angle, and one in an inner angle, and where $\hat{p_s}$ has an $R$-request. The 3 subcases from left to right correspond to $t=0$, $t=1$, and $t=2$.}
 \label{fig:notinadisk3}
 \end{center}
 \end{figure}

\noindent
{\bf If both $G$-requests are on inner angles,} edges $v'x$ and $u'x$
have been added to $G$ and now $x$ is an inner vertex of $G$. We thus
leave the angle incident to $x$ free.  Then we use $\max\{0, 2-t\}$ of
the outer angles for $B$-arcs from $x$, and the remaining non-free
outer angles have their $R$-requests satisfied, and are left free. In
any case, $\min\{2,t\}$ of the outer angles are free, as the angle
$\hat{x}$.

Finally by definition of stacking $P$, $x$'s unique neighboring path,
is distinct from $(u',v')$ and hence all cases have been addressed.

\subsection{Reorienting $B$}\label{sub:orientallexceptIG}
Given a partial orientation $O$ of $T$ we define the \emph{demand} of a vertex $v$ as $\dem_{O}(v):=-\delta^+_{|O}(v) \mod 3$, where $\delta^+_{|O}(v)$ denotes the outdegree of $v$ with respect to $O$. We want to find an orientation of $T$ with all demands~$0$.

Recall we will not modify the orientation on $R$, which guarantees that all vertices in $\left(V(T)\setminus V(G)\right)\cup\{u,v\}$ have non-zero outdegree. Furthermore, as $G$ will be oriented either entirely forward or backwards, all its interior vertices will have non-zero outdegree. Hence every vertex of $T[X]$ has non-zero outdegree. Suppose that $G$ is entirely oriented forward.

Now we linearly order vertices in $V(T)\setminus V(I)=(v_1, \ldots, v_{\ell})$ such that with respect to $B$ every vertex has its two outgoing $B$-neighbors among its predecessors and $I$. Denote by $B_i$ the subgraph of $B$ induced by the arcs leaving $v_i,\ldots, v_{\ell}$ (before the reorienting). We process $V(T)\setminus V(I)$ from the last to the first element. At a given vertex $v_i$ we look at $\dem_{G\cup R\cup B_i}(v_i)$ and reorient the two originally outgoing $B$-arcs of $v_i$ in such a way that afterwards $\dem_{G\cup R\cup B_i}(v_i)=0$ (i.e. $\delta^+_{|G\cup R\cup B_i}(v_i) \equiv 0 \mod 3$). As these $B$-arcs were heading at $I$ or at a predecessor, the demand on the vertices $v_j$, with $j>i$, is not modified and hence remains 0.

\subsection{Orienting $G$ and $I$}\label{sub:orientIG}

%% We have removed interiors of 3- and 4-sided disks of $I$. And since
%% each interior vertex in one boundary path of $D$ is adajcent to an
%% interior vertex on the other boundary path, one sees that $D$ is an
%% outerplanar graph (i.e. $D$ has no interior vertices), where vertices
%% $u$ and $v$ have degree 2.

Denote by $O$ the partial orientation of $T$ obtained after~\ref{sub:orientallexceptIG}. Pick an orientation of $G$ (either all forward or all backward) and of $e^*=uv$ such that for the resulting partial orientation $O'$ we have $\dem_{O'}(v)\equiv 1\mod 3$. 

Now, take the triangle $\Delta$ of $I$ containing $v$. Since $D=I\setminus e^*$ is a
maximal outerplanar graph with only two degree two vertices, $D$ can
be peeled by removing degree two vertices until reaching $\Delta$.
When a vertex $x$ is removed orient its two incident edges so that
$\dem_{O'}(x)= 0$ (as for $B$-arcs). We obtain a partial orientation $O''$,
such that all vertices except the ones of $\Delta$ have non-zero
outdegree divisible by $3$.

%% Now, take the triangle $\Delta$ of $D$ containing $v$. Since $D$ is outerplanar $D$ can be constructed by stacking vertices on edges starting with one edge of  $\Delta$, such that each edge is used at most once and such that this process ends with $u$. When a vertex is stacked orient the two edges connecting it to the edge it is stacked on. This yields an acyclic $2$-orientation of $E[I]\setminus\Delta$. As in~\ref{sub:orientallexceptIG} the edges in $E[I]\setminus\Delta$ can be reoriented such that afterwards all vertices in $I\setminus \Delta$ have non-zero outdegree divisible by $3$. We obtain a partial orientation $O''$, such that all vertices except the ones of $\Delta$ have non-zero outdegree divisible by $3$.

Since the number of edges of $T$, and the number of edges of $\Delta$
are divisible by $3$, the number of edges of $T\setminus \Delta$ is
divisible by $3$. As this number equals the sum of the outdegrees in
$O''$, and as every vertex out of $\Delta$ has outdegree divisible by
3, then the outdegree of $\Delta$'s vertices sum up to a multiple of
3. Hence their demands sum up to 0, 3 or 6. As
$\dem_{O''}(v)=\dem_{O'}(v) = 1$, the demands of the other two
vertices of $\Delta$ are either both $1$, or $0$ and $2$. It is easy
to see that in either case $\Delta$ can be oriented to satisfy all
three demands.

%% \subsection{Orienting interiors of 3- and 4-sided disks}\label{sub:triangles}

%% Now we reintroduce the interiors of 3-sided disks that were deleted in
%% the beginning. These together with their boundary triangle themselves
%% can be seen as a planar triangulation.  There exists an orientation of
%% the interior edges such that inner and outer vertices have
%% respectively out-degree 0 and 3. This is the case for orientations
%% induced by a Schnyder wood on these triangulations~\cite{Sch-89}.

%% Using the same property, one can orient the internal edges of a
%% chordless 4-sided disk with outer face $(v_1, \ldots, v_4)$ in such a
%% way that inner vertices have outdegree $3$, the vertex $v_1$ has
%% outdegree 1, and vertices $v_2, v_3, v_4$ have outdegree $0$. For that
%% use an orientation of the triangulation obtained by adding the edge
%% $v_2v_4$ (not here has the disk is chordless) and choosing $v_2v_3v_4$
%% has the outer face, and note that edges $v_1v_2$ and $v_1v_4$ are
%% oriented towards $v_2$ and $v_4$.

%% For a removed 4-sided disk (in $I$) with border $(v_1, \ldots, v_4)$,
%% removing the artificial diagonal, say $v_1v_3$, that was oriented, say
%% from $v_1$ to $v_3$, and reintroducing its interior vertices and
%% edges, oriented as described above, one obtains an orientation where
%% all vertices have demand $0$.

\bibliographystyle{my-siam}
\bibliography{baratthomassen}

\end{document}